\documentclass[11pt]{amsproc}

\usepackage{amsmath}
\usepackage{amsthm}
\usepackage{amssymb}
\usepackage{mathtools}
\usepackage{xcolor}
\usepackage{fullpage}
\usepackage[hypertexnames=false]{hyperref}
\usepackage{enumitem}
\usepackage{aliascnt}

\usepackage[capitalize,nameinlink,noabbrev,nosort]{cleveref}
\hypersetup{
	colorlinks=true,       
	linkcolor=orange,         
	citecolor=orange,        
	filecolor=orange,      
	urlcolor=orange,           
}

\theoremstyle{plain}
\newtheorem{theorem}{Theorem}[section]

\newaliascnt{lemma}{theorem}
\newtheorem{lemma}[lemma]{Lemma}
\aliascntresetthe{lemma}

\newaliascnt{corollary}{theorem}
\newtheorem{corollary}[corollary]{Corollary}
\aliascntresetthe{corollary}

\theoremstyle{remark}
\newaliascnt{remark}{theorem}
\newtheorem{remark}[remark]{Remark}
\aliascntresetthe{remark}

\numberwithin{equation}{section}

\newcommand{\bbF}{\mathbb{F}}
\newcommand{\bbZ}{\mathbb{Z}}
\newcommand{\bbQ}{\mathbb{Q}}

\newcommand{\calA}{\mathcal{A}}
\newcommand{\calC}{\mathcal{C}}
\newcommand{\calP}{\mathcal{P}}

\newcommand{\bsp}{\boldsymbol{p}}

\begin{document}

\title{On the $\mathcal{A}$-transcendence of a Champernowne-type constant}

\author{Shin-ichiro Seki}
\address{Nagahama Institute of Bio-Science and Technology, 1266, Tamura, Nagahama, Shiga, 526-0829, Japan}
\email{s\_seki@nagahama-i-bio.ac.jp}

\subjclass[2020]{11J81, 11A41, 11N05, 11C08}
\begin{abstract}
Let $p_n$ denote the $n$th prime.
We prove that for every nonzero polynomial $f(x)\in\bbZ[x]$, there exist infinitely many positive integers $n$ such that $p_n\nmid f(n)$.
\end{abstract}
\maketitle
\section{Introduction}
Let $p_n$ denote the $n$th prime.
In this note, we prove the following.
\begin{theorem}\label{thm:main}
For every nonzero polynomial $f(x)\in\bbZ[x]$, we have
\[
\#\{n\in\bbZ_{>0} : p_n\nmid f(n)\}=\infty.
\]
\end{theorem}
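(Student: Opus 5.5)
The plan is to argue by contradiction and turn the divisibility $p_n\mid f(n)$ into divisibility of a fixed nonzero integer of controlled size.

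\textbf{Setup.} Suppose $p_n\mid f(n)$ for all $n\ge N_0$. Write $f(x)=\sum_{i=0}^d a_i x^i$ with $a_d\neq 0$, and let $F(X,Y)=\sum_{i=0}^d a_i X^iY^{d-i}$ be its homogenization.

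\textbf{Reduction.} Suppose, for some $n\ge N_0$, we have integers $c,r$ with $cn\equiv r \pmod{p_n}$ and $p_n\nmid c$. Multiplying $f(n)\equiv 0$ by $c^d$ gives
\[
F(r,c)\equiv c^d f(n)\equiv 0 \pmod{p_n}.
\]
Moreover $|F(r,c)|\le C_f\max(|c|,|r|)^d$. So if $\max(|c|,|r|)<(p_n/C_f)^{1/d}$ and $F(r,c)\neq 0$, we get a contradiction. The nonvanishing is cheap: $F(r,c)=0$ forces $r/c$ to be a rational root of $f$, and there are only finitely many such ratios, which can be avoided. So it suffices to find infinitely many $n$ for which $n \bmod p_n$ is congruent to a ratio $r/c$ of height below $p_n^{1/d-\varepsilon}$. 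For $d=1$ this is immediate from Thue's lemma, which always gives $|c|,|r|\le\sqrt{p_n}$; in that case the theorem follows at once. The real content is $d\ge 2$.

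\textbf{Choosing $c,r$.} Since $n/p_n\approx 1/\log n$, I will look for $n$ with $kp_n-cn=-r$ small, with $c/k\approx \log n$ a fraction of small height. For fixed coprime $(c,k)$, the sequence $s_n=kp_n-cn$ has increments $kg_n-c$, where $g_n=p_{n+1}-p_n$. Because $p_n/n$ increases through $c/k$, this sequence changes sign somewhere. At a sign change $s_n<0\le s_{n+1}$ we get $|r|\le kg_n+c$. Then $cn\equiv r\pmod{p_n}$ with both $c$ and $|r|$ of size $O(kg_n+k\log n)$.

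\textbf{Main obstacle.} The difficulty is ensuring that, for infinitely many such crossings, the gap $g_n$ is at most $n^{1/d-\varepsilon}$. A pointwise bound such as Baker--Harman--Pintz only gives $g_n\ll n^{0.525}$, which is insufficient for $d\ge2$. I would proceed on average. In a dyadic range $n\in[N,2N]$, take all reduced fractions $c/k$ with $k\le K$ lying in the interval swept out by $p_n/n$. This interval has length $\asymp 1$ near $\log N$, so it contains $\gg K^2$ fractions. Distinct fractions yield distinct crossing indices $n$, because $p_n/n$ is essentially monotone up to small fluctuations; this must be checked. Meanwhile, known results on the sparsity of large gaps (Heath-Brown-type bounds for $\sum_{g_n\ge n^{\delta}} g_n$) show that only $o(N^{1-\eta})$ indices in $[N,2N]$ have $g_n>N^{\delta}$. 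Taking $K=N^{\theta}$ with $\theta$ small but with $K^2$ exceeding the number of bad indices, and $\delta+\theta<1/d$, at least one crossing in each dyadic range has a small gap. That crossing gives $\max(|c|,|r|)\ll N^{\delta+\theta}<p_n^{1/d-\varepsilon}$, and the reduction above yields the contradiction.

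I expect the delicate points to be exactly this balancing of $K$ against the large-gap exceptional set, together with the near-monotonicity of $p_n/n$ needed to separate the crossings. If that fails, my fallback is to use several consecutive relations $p_{n+j}\mid f(n+j)$ simultaneously to gain extra freedom.
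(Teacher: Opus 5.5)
\textbf{Verdict: genuine gap.} The reduction to low-height ratios is correct, but the averaging step fails quantitatively for $d\ge2$, and the key input you are missing is the Maynard--Tao theorem.

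\textbf{What works.} If $cn\equiv r\pmod{p_n}$ with $p_n\nmid c$, then $p_n\mid F(r,c)$. So it would suffice to find infinitely many $n$ for which $n\bmod p_n$ equals a ratio $r/c$ of height below $p_n^{1/d-\varepsilon}$. The case $d=1$ is trivial anyway, since $0<|f(n)|<p_n$ for large $n$.

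\textbf{Where it fails.} A crossing of $c/k$ at $n$ means $c/k\in(p_n/n,\,p_{n+1}/(n+1)]$. You have no information on where the large gaps sit relative to the Farey fractions. So every index with $g_n>H$ may absorb one of your fractions, however short its interval is. The argument therefore needs $\#\{n\in[N,2N]:g_n>H\}=o(K^2)$, while a good crossing requires $KH\lesssim (N\log N)^{1/d}$.

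With your parameters, $K^2=N^{2\theta}$ beats a bad set of size $N^{1-\eta}$ only if $2\theta>1-\eta$. Together with $\theta+\delta<1/d$ this forces $\eta>1-2/d+2\delta$. This also contradicts your choice of $\theta$ small. Already for $d=2$ it means a saving of more than $N^{\delta}$ over the trivial count $N^{1-\delta+o(1)}$.

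No known estimate gives this. Even on RH, Selberg's bound $\sum_{p_n\le x}(p_{n+1}-p_n)^2\ll x(\log x)^3$ only gives $\#\{g_n>H\}\ll N(\log N)^4H^{-2}$. That misses by logarithms when $d=2$ and by a power of $N$ when $d\ge3$. What you would actually need is Cram\'er-strength control of large gaps, with no guarantee that large gaps avoid your crossings.

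\textbf{The missing idea.} Your fallback, using several consecutive relations $p_{n+j}\mid f(n+j)$ at once, is the right instinct, but it only works with the Maynard--Tao theorem.
\begin{enumerate}[leftmargin=2.5em]
\item The paper sets $k=d(d+1)$ and uses $\liminf_{n}(p_{n+k}-p_n)<\infty$ to fix $0=h_0<\cdots<h_k$ with $p_{n+j}=p_n+h_j$ for infinitely many $n$.
\item It takes a nontrivial integer solution $(c_j)$ of the $d(d+1)$ linear equations $\sum_j j^u h_j^v c_j=0$ ($0\le u\le d$, $0\le v\le d-1$) in $k+1$ unknowns. This kills the first $d$ terms of the expansion of $\sum_j c_j f(n+j)/(p_n+h_j)$ in powers of $1/p_n$.
\item That sum is an integer because $p_{n+j}\mid f(n+j)$, and it is $O(n^d/p_n^{d+1})=o(1)$, so it vanishes.
\item Vanishing says $\Phi(n,p_n)=0$ for a fixed nonzero $\Phi\in\bbZ[x,y]$, contradicting the growth of $\Phi(n,p_n)$ forced by $p_n\sim n\log n$.
\end{enumerate}
Fixed bounded gaps among $k+1$ consecutive primes replace the average gap statistics your argument would require. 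Zhang's $k=1$ result would not suffice.
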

This is a new divisibility property of polynomial values.
One motivation for \cref{thm:main} comes from the smoothness of polynomial values.
Recall that a nonzero integer is called \emph{$y$-smooth} if each of its prime factors is at most $y$.
The following theorem gives a remarkable property of values of quadratic polynomials.
\begin{theorem}[{Bober--Fretwell--Martin--Wooley \cite[Corollary~1.2]{BoberFretwellMartinWooley2020}}]
Let $f(x)\in\mathbb{Z}[x]$ be quadratic.
Then, for every $\varepsilon>0$, there exist infinitely many positive integers $n$ such that $f(n)$ is $n^{\varepsilon}$-smooth.
\end{theorem}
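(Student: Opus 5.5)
The statement in question is the Bober--Fretwell--Martin--Wooley corollary: for quadratic $f(x)\in\bbZ[x]$ and every $\varepsilon>0$, infinitely many $n$ make $f(n)$ an $n^{\varepsilon}$-smooth number. I would prove it by splitting according to whether $f$ is reducible over $\bbQ$. The overall strategy is to exhibit a polynomial identity expressing $f(P(x))$ as a product of many factors of low degree, and then to specialise $x$.

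\emph{Reducible case.} Write $f(x)=c(\alpha x-\beta)(\gamma x-\delta)$ with integer coefficients. The constant $c$ is harmless. Choose $n$ along a sequence such as $n=\beta+\alpha M$ with $M=\prod_{p\le y}p^{e_p}$. Then $\alpha n-\beta=\alpha^2M$ is $y$-smooth. It remains to make $\gamma n-\delta$ smooth simultaneously.

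For this I would use the standard polynomial trick. Take $n=\alpha\gamma m^2 +(\text{suitable linear term})$, arranged so that both linear factors become values of linear forms in $m$ times a fixed factor. Alternatively, substitute $n=g(m)$, where $g$ has degree $k$ and both $\alpha g(m)-\beta$ and $\gamma g(m)-\delta$ split into linear factors in $m$. A product of $k$ consecutive shifts $(m+1)\cdots(m+k)$ adjusted by CRT is one such choice. Each factor then has size about $n^{1/k}$, so taking $k>1/\varepsilon$ gives $n^{\varepsilon}$-smoothness.

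\emph{Irreducible case.} Let $D$ be the discriminant, so that $4af(x)=(2ax+b)^2-D$ is a norm from $K=\bbQ(\sqrt D)$. The key algebraic input is an identity of the type $f(x)f(x+1)=f(x+f(x))$, valid after normalising to the monic form $x^2+bx+c$. This is checked by direct expansion, or conceptually because $f(x)\mid f(x+f(x))$ and the quotient has degree $2$ and equals $f(x+1)$.

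I would iterate and generalise it. The target is, for each $k$, a polynomial $P_k\in\bbZ[x]$ of degree $k$ such that
\[
f(P_k(x))=\prod_{i=1}^{k} q_i(x),\qquad \deg q_i\le 2 .
\]
In norm language, I would look for $k$ linear elements $x+\theta_i\in\bbZ[x][\omega]$ whose product has $\omega$-coefficient constant, possibly after multiplying by a fixed unit or a fixed element. The $\theta_i$ would be chosen among shifts $x+j$ and their conjugates, with the coefficients solved for recursively starting from the two-factor identity above. Granting such an identity, set $n=P_k(m)$. Then $n\asymp m^k$ and each $q_i(m)\ll m^2\asymp n^{2/k}$, so $f(n)$ is $n^{2/k+o(1)}$-smooth. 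Finally, take $k>2/\varepsilon$ and let $m\to\infty$, discarding the finitely many $m$ where $P_k(m)\le 0$ or $f(n)=0$.

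\emph{Main obstacle.} The hard step is constructing $P_k$ with a balanced factorisation. A naive Chinese-remainder construction fails because the conditions are overdetermined. For instance, forcing $P\equiv x+i \pmod{f(x+i)}$ for $i=1,\dots,m$ gives $\deg P\approx 2m$. The cofactor of $\prod_i f(x+i)$ in $f(P)$ then has degree about $2m$, which is too large to be smooth for free.

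My first attempt would be the composition route: show that the set of polynomials $P$ for which $f(P)$ splits into quadratics is closed under a composition such as $P\mapsto P+f(P)$ combined with substitutions $x\mapsto Q(x)$. This could yield degrees $2^j$ with all factors quadratic in $x$. If exact identities prove unavailable, I would fall back to the approach of the cited work. That means letting the $\theta_i$ range over rationals, so that the $\omega$-coefficient becomes a polynomial with a fixed root, and then evaluating near that root along an arithmetic progression, with the denominators absorbed into a bounded smooth factor.
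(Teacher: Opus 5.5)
\textbf{There is a genuine gap.} The paper contains no proof of this statement; it is quoted from Bober--Fretwell--Martin--Wooley. So your argument has to stand on its own, and its core is missing. Everything hinges on the existence of the auxiliary polynomials $P_k$, which you grant rather than construct. Moreover, the exact identities you aim for are Prouhet--Tarry--Escott (PTE) problems.

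\emph{Reducible case.} Requiring both $\alpha g-\beta$ and $\gamma g-\delta$ to split into linear factors over $\bbQ$ with $\deg g=k$ means that $g-\beta/\alpha$ and $g-\delta/\gamma$ are, up to the same leading coefficient, $\prod(t-a_i)$ and $\prod(t-b_i)$ differing by a constant. That is an ideal PTE solution of size $k$, and such solutions are known only for small $k$. The Chinese remainder theorem controls residues, not polynomial factorizations, so adjusting $(m+1)\cdots(m+k)$ cannot make $g-r$ split over $\bbQ$ for two distinct values of $r$.

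\emph{Irreducible case.} Since $P_k-\theta$ and $P_k-\bar\theta$ differ by a nonzero constant, they are coprime. Hence any factorization $f(P_k)=\prod_{i=1}^k q_i$ with $\deg q_i\le 2$ forces $P_k(t)-\theta$ to split into linear factors $t-\gamma_i$ over $K$. So you need $\gamma_1,\dots,\gamma_k\in K$ whose power sums of orders $1,\dots,k-1$ are all rational while $\prod\gamma_i\notin\bbQ$. This is a PTE problem for the pair $\{\gamma_i\}$, $\{\bar\gamma_i\}$, and your proposal does not solve it.

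Your suggested routes do not produce such $P_k$:
\begin{itemize}
\item The identity $f(P+f(P))=f(P)f(P+1)$ only helps if $f(P+1)$ also factors, and this fails already at the second step. Take $f=x^2+1$ and $P=x^2+x+1$, where $f(P)=f(x)f(x+1)$. Then $f(P+1)=(x^2+x+2)^2+1$ is irreducible: it would factor only if $x^2+x+2-i$ split over $\bbQ(i)$, but its discriminant $-7+4i$ has norm $65$, not a square. So $f(P+f(P))$ has a factor of degree $4=\deg(P+f(P))$, and nothing is gained.
\item The fallback with a nonconstant $\omega$-coefficient $B(t)$ only gives $B(t)^2f(A(t)/B(t))$. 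You give no mechanism making $A/B$ an integer that tends to infinity.
\end{itemize}

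\textbf{The missing idea} is that bounded-degree factors are not needed; factors of degree $o(\deg P)$ suffice. For example, define Dickson polynomials by $G_k(u+a/u,a)=u^k+a^ku^{-k}$ and $H_{k-1}(u+a/u,a)=(u^k-a^ku^{-k})/(u-au^{-1})$. They satisfy $G_k(x,a)^2-4a^k=(x^2-4a)H_{k-1}(x,a)^2$. The roots of $H_{k-1}(x,a)$ are $2\sqrt a\cos(\pi j/k)$, so its irreducible factors over $\bbQ$ have degree at most $\varphi(2k)$.

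Now take $a=\Delta$ (the discriminant), $k$ odd, and $x\in 2\Delta\bbZ$. Then $y=G_k(x,\Delta)/(2\Delta^{(k-1)/2})\asymp x^k$ is an integer, and $y^2-\Delta$ is composed of primes $\ll |x|^{\varphi(k)}\ll |y|^{\varphi(k)/k}$. Choosing $k$ to be a product of many small odd primes makes $\varphi(k)/k<\varepsilon$.

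This treats reducible and irreducible $f$ alike. What remains is to place $y$ in the residue class $b \bmod 2a$ needed to pass from $y^2-\Delta$ back to $4af(n)$.
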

If this theorem could be extended to the case where $\deg f\geq 3$, then \cref{thm:main} would follow as a special case, since the prime number theorem gives $p_n\sim n\log n$.
However, such an extension appears difficult to establish at present.

Besides its connection with the smoothness of polynomial values, another motivation for considering \cref{thm:main} comes from arithmetic in the ring
\[
\calA\coloneqq\Biggl(\prod_{p:\text{ prime}}\bbZ/p\bbZ\Biggr) \Bigg/ \Biggl(\bigoplus_{p:\text{ prime}}\bbZ/p\bbZ\Biggr),
\]
which has attracted increasing interest in recent years.
Indeed, \cref{thm:main} can be reformulated as a transcendence statement concerning a certain element of $\calA$.

Let $\calP_{\calA}^0$ denote the set of all finite algebraic numbers introduced by Rosen in \cite{Rosen}.
Let $\calC_{\calA}$ be the integral closure of $\bbQ$ in $\calA$.
Then the strict inclusions $\bbQ\subsetneq\calP_{\calA}^0\subsetneq\calC_{\calA}\subsetneq\calA$ hold.
In the ring $\mathcal A$, two notions of transcendence have been studied: transcendence in the sense of not belonging to $\mathcal P_{\mathcal A}^0$, and transcendence in the sense of not belonging to $\mathcal C_{\mathcal A}$.
Let $\alpha=(a_p\bmod p)_p\in\mathcal A$.
It is easy to see that if $a_p\to\infty$ as $p\to\infty$ and $a_p$ grows more slowly than every positive power of $p$, then $\alpha$ is transcendental in the stronger sense (\cite[Lemma~2.3]{MatsusakaSeki}); that is, $\alpha\notin\mathcal C_{\mathcal A}$.
For example, $(\lfloor\log p\rfloor\bmod{p})_p\notin\calC_{\calA}$ (\cite[Example~2.4]{MatsusakaSeki}).
Here, $\lfloor X\rfloor$ denotes the greatest integer less than or equal to $X$.
In contrast, the following criterion, established by Luca and Zudilin using a prime-counting argument, makes it possible to prove transcendence even when $a_p$ grows substantially faster.
\begin{lemma}[{Luca--Zudilin's second criterion \cite{LucaZudilin}, \cite[Lemma~2.5]{MatsusakaSeki}}]\label{lem:LucaZudilin}
Let $\alpha = (a_p\bmod{p})_p \in \mathcal{A}$.
Let $g(X)$ and $h(X)$ be positive-valued, monotonically increasing functions satisfying $g(X)\log X=o(h(X))$ as $X\to\infty$.
Let $S$ be an infinite set of primes, and let $(b_p)_{p \in S}$ be a sequence of integers satisfying $b_p\to\infty$ and $b_p = O(g(p))$ as $p \to \infty$.
Suppose that $a_p\equiv b_p \pmod{p}$ for all $p \in S$. 
Assume further that $\#\{ p \le X : p \in S \} \gg h(X)$ for all sufficiently large $X$. 
Then $\alpha\notin\calC_{\calA}$.
\end{lemma}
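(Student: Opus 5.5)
We argue by contradiction. Suppose $\alpha\in\calC_{\calA}$. Then $\alpha$ is integral over $\bbQ$, and clearing denominators in a monic relation gives a nonzero polynomial $P(x)\in\bbZ[x]$, say of degree $d\geq 1$, with $P(\alpha)=0$ in $\calA$. By the definition of $\calA$, this means $p\mid P(a_p)$ for all sufficiently large primes $p$. For $p\in S$ we have $a_p\equiv b_p\pmod p$, so $p\mid P(b_p)$ for all sufficiently large $p\in S$. Since $b_p\to\infty$ and $P$ has only finitely many roots, $P(b_p)\neq 0$ for all $p\in S$ beyond some bound $p_0$.

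The key step is to compare two estimates for $\sum_{p\in S,\,p_0<p\le X}\log p$.

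\emph{Upper bound.} Fix $X$ large. For $p\in S$ with $p\le X$, monotonicity of $g$ gives $|b_p|\le C g(p)\le C g(X)$. Hence every such $b_p$ lies in the set $B_X=\{b\in\bbZ : |b|\le Cg(X),\ P(b)\neq 0\}$, which has $O(g(X))$ elements. Group the primes according to the value $b=b_p$. The distinct primes $p$ attached to a fixed $b$ all divide the nonzero integer $P(b)$, so their product is at most $|P(b)|$. Therefore
\[
\sum_{\substack{p\in S\\ p_0<p\le X}}\log p\;\le\;\sum_{b\in B_X}\log|P(b)|\;\ll\; g(X)\,\bigl(1+\log g(X)\bigr).
\]
To turn this into $\ll g(X)\log X$, I need $g(X)\le X$ eventually. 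This should follow from the hypotheses: $h(X)\ll\#\{p\le X:p\in S\}\le X$ and $g(X)\log X=o(h(X))$ together give $g(X)=o(X)$. Hence the sum is $\ll g(X)\log X=o(h(X))$.

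\emph{Lower bound.} Each term satisfies $\log p\ge\log 2$, and the number of terms is $\#\{p\le X:p\in S\}-O(1)\gg h(X)$. So the same sum is $\gg h(X)$.

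These two bounds contradict each other for large $X$, so $\alpha\notin\calC_{\calA}$. The step I expect to need the most care is the upper bound. The naive approach would bound the number of primes dividing each $P(b)$, but that loses when many $b$ share a value or when $h(X)$ is small compared with $X$. Bounding the product of the distinct prime divisors by $|P(b)|$ handles this, but it requires checking that each $b\in B_X$ is used only once, with all its primes absorbed into the single quantity $\log|P(b)|$. It also requires the auxiliary inequality $g(X)\le X$, so that $\log|P(b)|\ll\log X$ uniformly over $b\in B_X$.
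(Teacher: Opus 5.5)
Your proof is correct and is essentially the counting argument the paper invokes from \cite[Lemma~2.5]{MatsusakaSeki}. Grouping the primes $p\in S$ by the value $b_p$, bounding $\sum_{p\in S,\,b_p=b}\log p\le\log|P(b)|$ and then using $\log p\ge\log 2$ is exactly the crude estimate $\omega(n)\le\log n/\log 2$ used there, so, contrary to your closing remark, the ``naive'' count via $\omega(P(b))$ loses nothing compared with your weighted version; the auxiliary facts you rely on, namely $g(X)=o(X)$ (from $h(X)\ll\pi(X)\le X$) and $h(X)\to\infty$ (immediate from $g(X)\log X=o(h(X))$ with $g>0$ increasing, and needed to discard the finitely many primes $p\le p_0$ in the lower bound), both follow directly from the hypotheses.
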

Although this formulation allows more general growth functions than
\cite[Lemma~2.5]{MatsusakaSeki}, the same counting argument applies.
This criterion shows that an element is transcendental if, on a sufficiently dense set of primes, it admits integer representatives $b_p$ tending to infinity and satisfying $b_p=O(p^\epsilon)$ for some $0<\epsilon<1$.
For example, one obtains
\[
(\lfloor\sqrt{p}\rfloor\bmod{p})_p\in\calA\setminus\calC_{\calA}
\]
(\cite[Example~2.6]{MatsusakaSeki}).
The strength of this criterion lies in the fact that, beyond such artificial examples, it can also establish the transcendence of more natural elements, such as
\[
\bigl((p+1-\#E(\bbF_p))\bmod{p}\bigr)_p\in\calA\setminus\calC_{\calA},
\]
associated with an elliptic curve $E$ defined over $\bbQ$, and
\[
(B_{\frac{p+1}{2}}\bmod{p})_p\in\calA\setminus\calC_{\calA},
\]
defined in terms of the Bernoulli numbers $(B_n)_n$.
See \cite[Theorem~4.1 and 5.1]{MatsusakaSeki}.

To examine how far the growth of $g(X)$ can be extended beyond $X^{\epsilon}$, note that, by the prime number theorem, $h(X)$ can be at most of order $X/\log X$.
Thus, the criterion allows the possibility of taking $g(X)=X/(\log X)^3$, but it cannot be applied with $g(X)=X/\log X$.
Therefore, writing $\pi(X)$ for the prime-counting function, the criterion shows that
\[
\pi^3(\bsp)\coloneqq(\pi(\pi(\pi(p)))\bmod{p})_p\in\mathcal{A}\setminus\calC_{\calA}.
\]
However, it does not apply to
\[
\pi(\bsp)\coloneqq(\pi(p)\bmod{p})_p=(n\bmod{p_n})_{p_n}\in\mathcal{A}.
\]

The proof of \cite[Lemma~2.5]{MatsusakaSeki} uses the crude estimate $\omega(n)=O(\log n)$, where $\omega(n)$ denotes the number of distinct prime divisors of $n$.
Using instead the standard sharper estimate $\omega(n)=O\left(\frac{\log n}{\log\log n}\right)$, the same counting argument shows that the hypothesis $g(X)\log X=o(h(X))$ may be weakened to $g(X)\frac{\log X}{\log\log X}=o(h(X))$.
This makes the choice $g(X)=X/(\log X)^2$ admissible and hence would imply that
\[
\pi^2(\bsp)\coloneqq(\pi(\pi(p))\bmod{p})_p\in\mathcal{A}\setminus\calC_{\calA}.
\]
However, it would still not allow one to take $g(X)=X/\log X$.
If, in each component of $\pi(\bsp)$, $\pi^2(\bsp)$, and $\pi^3(\bsp)$, we omit ``$\mathrm{mod} \ p$'' and write only the chosen integer representatives, then
\begin{align*}
\pi(\bsp)&=(1,2,3,4,5,6,7,8,9,10,11,12,13,14,15,16,17,18,19,20,\dots),\\
\pi^2(\bsp)&=(0,1,2,2,3,3,4,4,4,4,5,5,6,6,6,6,7,7,8,8,\dots),\\
\pi^3(\bsp)&=(0,0,1,1,2,2,2,2,2,2,3,3,3,3,3,3,4,4,4,4,\dots).
\end{align*}
As noted in \cite{MatsusakaSeki}, the element $\pi(\bsp)$ is reminiscent of the Champernowne constant.
The classical Champernowne constant was proved transcendental by Mahler \cite{Mahler} using Schneider's theorem.

The Champernowne-type constant $\pi(\bsp)$ was originally conceived by the author several years ago as an artificial example whose irrationality in $\calA$ follows easily from the prime number theorem.
Although \cref{lem:LucaZudilin} subsequently provided a new method for proving the transcendence of several natural elements of $\calA$, $\pi(\bsp)$ remained an example for which transcendence was difficult to establish.

The transcendence of $\pi(\bsp)$ in the strong sense, namely, $\pi(\bsp)\notin\calC_{\calA}$, was conjectured in \cite[Conjecture~6.2]{MatsusakaSeki}, and the following partial results were obtained:
\begin{itemize}[leftmargin=2.5em]
\item The transcendence of $\pi(\bsp)$ in the weak sense holds; that is, $\pi(\bsp)\notin\calP_{\calA}^0$ (\cite[Proposition~6.1]{MatsusakaSeki}).
\item For every nonzero polynomial $f(x)\in\bbZ[x]$ such that every irreducible factor of $f(x)$ has degree at most two, one has $f(\pi(\bsp))\neq0$ (\cite[Theorem~6.7]{MatsusakaSeki}).
\item Assuming either the conjecture on the smoothness of polynomial values or the conjecture on the equidistribution of the roots of every irreducible polynomial, one has $\pi(\bsp)\notin\calC_{\calA}$ (\cite[Proposition~6.5 and the paragraph immediately preceding Section~7]{MatsusakaSeki}).
\end{itemize}
Since it is easy to verify that $\pi(\bsp)\notin\calC_{\calA}$ is equivalent to \cref{thm:main}, we obtain the following.
\begin{corollary}
The Champernowne-type constant $\pi(\bsp)$ is transcendental in the strong sense; that is, $\pi(\bsp)\notin\calC_{\calA}$.
\end{corollary}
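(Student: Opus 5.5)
The plan is to deduce the corollary directly from \cref{thm:main} by contraposition. Suppose, for contradiction, that $\pi(\bsp)\in\calC_{\calA}$. Since $\calC_{\calA}$ is the integral closure of $\bbQ$ in $\calA$, there is a monic polynomial $F(x)\in\bbQ[x]$ with $F(\pi(\bsp))=0$ in $\calA$. Let $D$ be a positive common denominator of the coefficients of $F$, and put $f(x)\coloneqq DF(x)\in\bbZ[x]$. Then $f$ is nonzero, because its leading coefficient is $D$. Also $f(\pi(\bsp))=D\cdot F(\pi(\bsp))=0$ in $\calA$.

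Next I will make explicit what the identity $f(\pi(\bsp))=0$ in $\calA$ means componentwise. Ring operations in $\calA$ are computed componentwise in $\prod_p\bbZ/p\bbZ$, modulo the ideal $\bigoplus_p\bbZ/p\bbZ$. Since $f$ has integer coefficients, the $p$-component of $f(\pi(\bsp))$ is simply $f(\pi(p))\bmod p$; no division is needed, which is the reason for clearing denominators first. An element of $\calA$ vanishes exactly when all but finitely many of its components are zero. Hence $p\mid f(\pi(p))$ for all sufficiently large primes $p$. Writing $p=p_n$, so that $\pi(p_n)=n$, this says $p_n\mid f(n)$ for all but finitely many $n\in\bbZ_{>0}$.

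This contradicts \cref{thm:main}, which gives infinitely many $n$ with $p_n\nmid f(n)$ for every nonzero $f(x)\in\bbZ[x]$. Therefore $\pi(\bsp)\notin\calC_{\calA}$.

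There is no real obstacle in this deduction. The only points needing care are the passage from the rational monic polynomial to a nonzero integer polynomial, and the identification of the zero element of $\calA$ with sequences that are eventually zero. All the substance lies in \cref{thm:main} itself. Since the converse implication is equally formal (given $f$ with $p_n\mid f(n)$ for almost all $n$, divide by the leading coefficient, which is invertible modulo all large $p$), the two statements are in fact equivalent, as asserted in the text.
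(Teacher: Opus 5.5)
Your proposal is correct and takes the same route as the paper, which deduces the corollary from \cref{thm:main} by noting that $\pi(\bsp)\notin\calC_{\calA}$ is easily seen to be equivalent to it. You have written out that routine verification: clear the denominators of a monic annihilating polynomial, then read $f(\pi(\bsp))=0$ componentwise as $p_n\mid f(n)$ for all large $n$.
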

In \cite{MatsusakaSeki}, the authors ask whether Conjecture~6.2 can be proved using more specific properties of $p_n$, rather than relying only on its asymptotic behavior $p_n\sim n\log n$.
The proof of \cref{thm:main} follows precisely this direction: besides the prime number theorem, it uses the Maynard--Tao theorem on small gaps between primes.
\subsection*{Use of AI}
The author used ChatGPT, powered by GPT-5.6 Sol Pro, during the
development of this work.
The key idea and an initial version of the proof of \cref{thm:main} were suggested by the model.
The author also used ChatGPT to improve the English prose of the manuscript. The author independently reconstructed and verified the complete argument and rewrote the proof presented in this note.
The author takes full responsibility for all claims, proofs, and text in the note.
\subsection*{Acknowledgements}
The author thanks Prof.~Toshiki Matsusaka for reading the manuscript and providing helpful comments.
The author also thanks Prof.~Yuta Suzuki for discussions on Luca--Zudilin's second criterion.
This research was supported by JSPS KAKENHI Grant Number JP26K06734.
\section{Proof of the main theorem}
\begin{proof}
Let $f(x)\in\bbZ[x]$ be an arbitrary nonzero polynomial, and let $d$ denote its degree.
Assume that, for every sufficiently large integer $n$, one has $p_n\mid f(n)$.
Then we have $d\geq 1$.
Choose an integer $N$ such that $p_n\mid f(n)$ for every integer $n\geq N$.
Set $k\coloneqq d(d+1)$.
By the Maynard--Tao theorem \cite[Theorem~1.1]{Maynard},
\[
\liminf_{n\to\infty}(p_{n+k}-p_n)<\infty.
\]
It therefore follows from the pigeonhole principle that there exist integers $0=h_0<h_1<\cdots<h_k$ such that
\begin{equation}\label{eq:MaynardTao}
p_{n+j}=p_n+h_j,\qquad 0\leq j\leq k
\end{equation}
holds for infinitely many integers $n$.
Fix one such tuple $(h_j)_j$, and let $M$ denote the infinite set of all integers $n\geq N$ for which \eqref{eq:MaynardTao} holds.

The homogeneous system of $k$ linear equations in the $k+1$ variables $x_0,\dots,x_k$ given by
\begin{equation}\label{eq:Artin}
\sum_{j=0}^k j^u h_j^v x_j=0,\qquad 0\leq u\leq d,\quad 0\leq v\leq d-1
\end{equation}
has a nontrivial integer solution
\[
(x_0,\dots,x_k)=(c_0,\dots,c_k)\in\bbZ^{k+1}\setminus\{(0,\dots,0)\}.
\]
Fix one such solution.
Here, we adopt the convention that $0^0=1$.

Using $f$ and the chosen tuples $(h_j)_j$ and $(c_j)_j$, define the polynomial $\Phi(x,y)$ in two variables with integer coefficients by
\[
\rho(y)\coloneqq\prod_{i=0}^k(y+h_i),
\]
and
\[
\Phi(x,y)\coloneqq\sum_{j=0}^kc_jf(x+j)\frac{\rho(y)}{y+h_j}\in\bbZ[x,y].
\]
Suppose, for contradiction, that $\Phi(x,y)$ is the zero polynomial.
Then, for every $0\leq i\leq k$, we have
\[
0=\Phi(x,-h_i)=c_if(x+i)\prod_{j\neq i}(h_j-h_i).
\]
Since the $h_j$ are pairwise distinct and $f$ is nonzero, it follows that $c_i=0$.
Thus, $c_i=0$ for every $0\leq i\leq k$, contradicting the fact that $(c_j)_j$ is a nontrivial solution.
Therefore, $\Phi(x,y)$ is a nonzero polynomial.
Write
\[
\Phi(x,y)=\sum_{i,j}a_{i,j}x^iy^j,
\]
and let $(I,J)$ be a pair such that $a_{I,J}\neq0$, $I+J$ is maximal among all pairs $(i,j)$ with $a_{i,j}\neq0$, and, subject to this condition, $J$ is maximal.
Then, by the prime number theorem $p_n\sim n\log n$, we have
\begin{equation}\label{eq:asymp_Phi}
\Phi(n,p_n)\sim a_{I,J}n^{I+J}(\log n)^J
\qquad\text{as }n\to\infty.
\end{equation}

In what follows, $n$ is always assumed to belong to $M$.
Then, by the definition of $M$, we have
\begin{equation}\label{eq:integer}
\frac{\Phi(n,p_n)}{\rho(p_n)}=\sum_{j=0}^kc_j\frac{f(n+j)}{p_n+h_j}=\sum_{j=0}^kc_j\frac{f(n+j)}{p_{n+j}}\in\bbZ.
\end{equation}
We now apply the finite geometric-series identity
\[
\frac{1}{p_n+h_j}=\sum_{v=0}^{d-1}\frac{(-h_j)^v}{p_n^{v+1}}+\frac{(-h_j)^d}{p_n^d(p_n+h_j)}.
\]
For each $0\leq j\leq k$, write
\[
f(n+j)=\sum_{u=0}^d b_u(n)j^u.
\]
Then, using \eqref{eq:Artin}, we compute
\[
\sum_{j=0}^kc_jf(n+j)\sum_{v=0}^{d-1}\frac{(-h_j)^v}{p_n^{v+1}}=\sum_{u=0}^d\sum_{v=0}^{d-1}\frac{(-1)^vb_u(n)}{p_n^{v+1}}\sum_{j=0}^kc_jj^uh_j^v=0.
\]
Therefore, we obtain
\[
\frac{\Phi(n,p_n)}{\rho(p_n)}=\sum_{j=0}^kc_jf(n+j)\frac{(-h_j)^d}{p_n^d(p_n+h_j)}=O(n^d/p_n^{d+1})=o(1)\qquad\text{as }n\to\infty.
\]
Here, the implied constant depends only on $f$, $(h_j)_j$, and $(c_j)_j$, and is independent of $n$.
Combining this with \eqref{eq:integer}, we conclude that
\[
\Phi(n,p_n)=0
\]
for all sufficiently large $n\in M$.
This contradicts \eqref{eq:asymp_Phi}.
\end{proof}
\begin{remark}
It is noteworthy that the proof has a structure reminiscent of classical arguments in transcendental number theory: integrality is combined with an asymptotic estimate to derive a contradiction.
In the proof above, the Maynard--Tao theorem is used together with a nontrivial solution of a homogeneous system of linear equations to obtain the estimate $O(n^d/p_n^{d+1})$.
(In fact, the weaker estimate $O(n^d/p_n^d)=o(1)$ would already suffice.)
For this argument, Zhang's earlier result $\displaystyle\liminf_{n\to\infty}(p_{n+1}-p_n)<\infty$ is not sufficient.
What is needed is the stronger Maynard--Tao result that, for every fixed $k$, one has $\displaystyle\liminf_{n\to\infty}(p_{n+k}-p_n)<\infty$.
\end{remark}
The proof presented here suggests that fine local information on the distribution of primes, going beyond the asymptotic information provided by the prime number theorem, can be effectively combined with a linear-algebraic annihilation argument to address arithmetic questions concerning polynomial values.
We hope that this perspective will lead to further progress both in the study of polynomial values and in transcendence problems in $\calA$.

\end{document}